\theoremstyle{plain}
\newtheorem{Th}{Theorem}
\newtheorem{Pro}[Th]{Proposition}
 \theoremstyle{definition}
\newtheorem{Ex}[Th]{Example}
\newtheorem{Rem}[Th]{Remark}
\newtheorem{?}[Th]{Problem}
\newcommand{\Z}{\mathbb{Z}}
\newcommand{\F}{\mathbb{F}}
\newcommand{\C}{\mathbb{C}}
\DeclareMathOperator{\rk}{rank}
\begin{document}

\title[On the  gcd of the value of two polynomials]
{On the  greatest common divisor of the value of two polynomials}

\author[P.\ E.\ Frenkel]{P\'eter E.\ Frenkel}

\address{E\"{o}tv\"{o}s  University \\ Department of Algebra and Number Theory 
 \\ P\'{a}zm\'{a}ny P\'{e}ter s\'{e}t\'{a}ny 1/c \\H-1117 Budapest, Hungary \\ and R\'enyi Institute of Mathematics, Hungarian Academy of Sciences \\ 13-15 Re\'altanoda utca \\ H-1053 Budapest, Hungary}
\email{frenkelp265@gmail.com}

\thanks{Research of the first author  is partially supported by ERC Consolidator Grant 648017, by MTA R\'enyi Lend\"ulet Groups and Graphs research group,  and by the Hungarian National Research, Development and Innovation Office -- NKFIH, OTKA grants no.\ K109684 and K104206.}

\author[J.\ Pelik\'an]{J\'ozsef Pelik\'an}

\address{E\"{o}tv\"{o}s  University \\ Department of Algebra and Number Theory
\\ P\'{a}zm\'{a}ny P\'{e}ter s\'{e}t\'{a}ny 1/c \\ H-1117 Budapest, Hungary }
\email{pelikan@cs.elte.hu}

 \subjclass[2010]{}

 \keywords{}

\begin{abstract} We show that if  two monic polynomials with integer coefficients have square-free resultant, then all positive divisors of the resultant arise as the greatest common divisor of the values of the two polynomials at a suitable integer.
\end{abstract}

\maketitle

Throughout this paper, 
 $f,g\in \Z[x]$ are monic polynomials with integer coefficients:
 \begin{equation}\label{f}f(x)=a_0x^k+a_1x^{k-1}+\dots +a_k\end{equation} and
  \begin{equation}\label{g}g(x)=b_0x^l+b_1x^{l-1}+\dots +b_l,\end{equation} where $a_0=b_0=1$.
  Our interest is in the range of the greatest common divisor $\gcd(f(n),g(n))$ as $n$ varies in the ring $\Z$ of integers. Such  gcd's  can behave in intriguing ways.

\begin{Ex}\label{pl}
(a) A problem in a Hungarian mathematics competition in 2015 
  asked for the range of $\gcd\left(n^2+3, (n+1)^2+3\right)$. The answer is $\{1,13\}$. The gcd is 1 for $n=1,\dots ,5$ but is  13 for $n=6$.

(b) The Prime Glossary page~\cite{pg} explaining  the ``law of small numbers'' of R.\ K.\ Guy~\cite{G} points out that the
$\gcd\left(n^{17}+9, (n+1)^{17}+9\right)$ is 1 for  $n=1,\dots, N-1$, but is greater than $1$  for $n=N$, where $$N=8424432925592889329288197322308900672459420460792433.$$ This number $N$ has 52 digits, and the gcd for $n=N$
is the 52-digit prime $$p=8936582237915716659950962253358945635793453256935559.$$
\end{Ex}

Turning to the general case, let $r=R(f,g)\in \Z$ be the \it resultant \rm of the two polynomials. Recall that, by definition, $r$ is the determinant of the \it Sylvester matrix \rm \begin{equation}\label{Sylv}M=\begin{pmatrix} a_0 & a_1 & \dots & a_k & & \\&a_0 & a_1 & \dots & a_k &\\  & & \dots  & \dots &\dots &\dots \\&&&a_0 & a_1 & \dots & a_k \\ b_0 & b_1 & \dots & b_l & & \\&b_0 & b_1 & \dots & b_l &\\  & & \dots  & \dots &\dots &\dots \\&&&b_0 & b_1 & \dots & b_l
\end{pmatrix}\end{equation} of the two polynomials. Note that $M$ is an $(l+k)$-square matrix; the first $l$ rows are built from the coefficients of $f$, and the last $k$ rows
 are built from the coefficients of $g$, padded with zeros.

 The most widely applied fact about the resultant is that it  is zero if and only if the two polynomials have a common complex root, or, equivalently, a non-constant common divisor in $\C[x]$. This holds true even if the coefficients are arbitrary complex numbers. In our case, however, the coefficients are integers. In this setting, the resultant is zero if and only if the two polynomials have  a non-constant common divisor in $\Z[x]$. 

  We start with  two  easy observations relating the resultant $r$ to the gcd of the polynomial values.

\begin{Pro}\label{div} (a) For any integer $n$, $\gcd(f(n), g(n))$ divides $r$.

(b) 
As a function of $n$, the value $\gcd(f(n), g(n))$ is periodic with period  $r$.
\end{Pro}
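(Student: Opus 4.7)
For part (a), the central ingredient is the classical B\'ezout-type identity for the resultant: there exist $u, v \in \Z[x]$ such that
\[u(x)f(x)+v(x)g(x)=r.\]
I would derive this directly from the Sylvester matrix $M$ in (\ref{Sylv}). Setting $\mathbf{y}=(x^{k+l-1}, x^{k+l-2},\dots, x, 1)^T$, a direct computation shows that
\[M\mathbf{y}=(x^{l-1}f(x),\, x^{l-2}f(x),\dots, f(x),\, x^{k-1}g(x),\dots, g(x))^T.\]
Multiplying on the left by $\operatorname{adj}(M)$, which has entries in $\Z[x]$, gives $r\,\mathbf{y}=\operatorname{adj}(M)\cdot M\mathbf{y}$; reading off the last coordinate exhibits $r$ as an integer-polynomial combination of $x^{l-j}f(x)$ and $x^{k-i}g(x)$, hence of $f$ and $g$. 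Evaluating at $x=n$ yields $r=u(n)f(n)+v(n)g(n)$, so $\gcd(f(n),g(n))\mid r$.

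For part (b), I would combine (a) with the elementary observation that for every $p\in\Z[x]$ and every $a,m\in\Z$,
\[m\mid p(a+m)-p(a),\]
which holds monomial-by-monomial via the binomial theorem. Now let $d=\gcd(f(n),g(n))$. By (a), $d\mid r$, so the observation gives $d\mid f(n+r)-f(n)$ and $d\mid g(n+r)-g(n)$. Combined with $d\mid f(n)$ and $d\mid g(n)$, this yields $d\mid f(n+r)$ and $d\mid g(n+r)$, so $d\mid\gcd(f(n+r),g(n+r))$. Applying the same argument with $n$ and $r$ replaced by $n+r$ and $-r$ gives the reverse divisibility, proving the equality $\gcd(f(n),g(n))=\gcd(f(n+r),g(n+r))$.

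\textbf{Main obstacle.} There is no serious obstacle. The only non-routine step is producing the representation $r=u(x)f(x)+v(x)g(x)$ with $u,v\in\Z[x]$; once this identity is in hand, both (a) and (b) are short. The delicacy in (b) is not to lose integrality: we cannot just reduce modulo $r$ (which need not divide $f(n)$), but must first use (a) to know that $d\mid r$ and then ride the congruence $p(n+r)\equiv p(n)\pmod r$ down to the coarser modulus $d$.
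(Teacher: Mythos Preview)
Your proposal is correct and follows essentially the same approach as the paper: both parts rest on computing $M\mathbf{y}$ with $\mathbf{y}=(x^{k+l-1},\dots,1)^\top$ for (a), and on $p(n+r)\equiv p(n)\pmod r$ combined with part (a) for (b). The only cosmetic differences are that the paper deduces $\det M\equiv 0\pmod d$ directly by a column-reduction argument rather than passing through the adjugate and the B\'ezout identity $r=uf+vg$, and it packages (b) as the one-line equality $\gcd(f(n+r),g(n+r),r)=\gcd(f(n),g(n),r)$ before invoking (a) to drop the third argument.
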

Note that $r$ can be zero. By definition, any function is periodic with period 0.
\begin{proof}
(a) Let $d=\gcd(f(n), g(n))$. Each coordinate of the column vector $$M\cdot(n^{k+l-1}, n^{k+l-2}, \dots, n,1)^\top$$ is divisible by either $f(n)$ or $g(n)$, and therefore by $d$. Thus, the last column of $M$ is congruent modulo $d$ to a linear combination, with integral coefficients,  of the previous  columns. It follows that $r=\det M\equiv 0\mod d$, as claimed.


(b) We have $f(n+r)\equiv f(n)$ and  $g(n+r)\equiv g(n)$   mod $r$. 
 It follows that $$\gcd(f(n+r), g(n+r), r)=\gcd(f(n), g(n), r).$$ 
In view of statement (a), the third argument can be omitted from the gcd on both sides, proving statement (b). 
\end{proof}

 Recall that $r=0$ if and only if $f$ and $g$ have a non-constant common divisor $h$ in the ring $\Z[x]$. In this case,  $\gcd(f(n), g(n))$ is divisible by $h(n)$ for all $n$ and therefore has an infinite range and no nonzero period.

Do  all nonnegative divisors of $r$  arise as $\gcd(f(n),g(n))$ with suitable integral $n$?  In particular, does  $|r|$ itself arise as such a gcd?   
  Not necessarily.

\begin{Ex}\label{3}
Let $f(x)=g(x)=x^2+x+1$. Then $r=0$, so all integers divide $r$, but not all nonnegative integers arise as $\gcd(f(n), g(n))=n^2+n+1$. In fact, no even numbers arise. In particular, 0 itself does not arise.
\end{Ex}


What if we assume $r\ne 0$? The answer 
 is still no.

\begin{Ex}\label{4}
Let $f(x)=x^2-1$ and $g(x)=x^2+1$. Then $r=4$,  but the range of $\gcd(f(n), g(n))$ is $\{1,2\}$.
\end{Ex}

This example also shows  that when $r\ne 0$, $|r|$ need not be the smallest positive period of
$\gcd(f(n), g(n))$. In Example~\ref{4},  we have $r=4$, but the smallest positive period is 2.

Our main result, Theorem~\ref{main} below, says that when $r$ is square-free, Proposition~\ref{div}(a)  is the only restriction on the values attained by the gcd, and the smallest positive period of the gcd is $|r|$.

For this, we shall need
a  basic fact about integer matrices: they can be brought to \it Smith normal form. \rm For any matrix $M$ with integral entries, there exist matrices $U$ and $V$, also with integral entries and invertible over $\Z$, such that $UMV$ is a diagonal matrix with diagonal entries $d_1$, $d_2$, \dots, where the so-called \it invariant factors
\rm $d_i$ satisfy $d_i|d_{i+1}$ for all $i$. See Smith's original paper \cite{S}, or see,
e.g., \cite[Section 5.3]{AW} for a textbook presentation. Note that $U$ and $V$, being invertible over $\Z$, are necessarily square matrices with determinant $\pm 1$. If $M$ is also square, it follows that \begin{equation}\label{det}\prod d_i=\det (UMV)=\pm\det M.\end{equation}

In the proof of our main result, we shall have to leave the realm of polynomials with integer coefficients and consider polynomials over the field $\F_p$ of prime cardinality  $p$.  Given two polynomials $f$ and $g$ over any field $F$, of degree $k$ and $l$ respectively, with coefficients as in \eqref{f} and \eqref{g}, their
 Sylvester matrix $M$ is   defined by the formula \eqref{Sylv}.
  We shall need

  \begin{Th} \cite[Theorem 1.19]{J}  The \emph{corank} (or \emph{kernel dimension}) $k+l-\rk M$ of $M$ over $F$ equals the degree of the gcd of the two  polynomials $f$ and $g$ as elements of the polynomial ring $F[x]$.
  \end{Th}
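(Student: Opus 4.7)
My plan is to realize the Sylvester matrix $M$ as the matrix of a natural linear map between polynomial spaces, and then compute its corank by exhibiting an explicit basis for the kernel.

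Concretely, I would introduce the $F$-linear map
\[
S\colon F[x]_{<l}\oplus F[x]_{<k}\to F[x]_{<k+l},\qquad (u,v)\mapsto uf+vg,
\]
where $F[x]_{<n}$ denotes the space of polynomials of degree less than $n$. With respect to the natural monomial bases, the matrix of $S$ is, up to transposition, exactly $M$ from \eqref{Sylv}: the rows of $M$ as displayed there are the coefficient vectors of $x^{l-1}f,\dots,xf,f,x^{k-1}g,\dots,xg,g$ in $F[x]_{<k+l}$. In particular $\rk M=\rk S$, and since the source and target of $S$ both have $F$-dimension $k+l$, the corank of $M$ equals $\dim_F\ker S$.

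Next I would compute $\dim_F\ker S$ directly. Let $h=\gcd(f,g)\in F[x]$, set $d=\deg h$, and write $f=hf_1$, $g=hg_1$ with $\gcd(f_1,g_1)=1$. A pair $(u,v)$ lies in $\ker S$ iff $uf=-vg$, equivalently $uf_1=-vg_1$; since $\gcd(f_1,g_1)=1$, this forces $v=wf_1$ and $u=-wg_1$ for some $w\in F[x]$. The degree constraints $\deg u<l$ and $\deg v<k$ both translate, using $\deg f_1=k-d$ and $\deg g_1=l-d$, to $\deg w<d$. Hence the map $w\mapsto(-wg_1,wf_1)$ is an $F$-linear isomorphism $F[x]_{<d}\xrightarrow{\sim}\ker S$, so $\dim_F\ker S=d$.

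Combining the two steps yields $\operatorname{corank}M=d=\deg\gcd(f,g)$, as desired. The only delicate point is the identification of the matrix $M$ displayed in \eqref{Sylv} with the map $S$ (keeping rows-versus-columns and ordering conventions straight); the kernel computation itself reduces to the standard cancellation property for coprime elements in the UFD $F[x]$ and presents no further obstacle.
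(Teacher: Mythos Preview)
Your proof is correct. Both you and the paper identify the Sylvester matrix with the linear map $S\colon(u,v)\mapsto uf+vg$ from $F[x]_{<l}\oplus F[x]_{<k}$ to $F[x]_{<k+l}$, so the setup is the same; the difference lies in which side of rank--nullity you compute. The paper determines the \emph{image} (row space) of $S$, showing via a division-with-remainder argument that it consists exactly of the multiples of $\gcd(f,g)$ of degree below $k+l$, whence $\rk M=k+l-d$. You instead parametrize the \emph{kernel} directly, using coprimality of $f_1$ and $g_1$ in the UFD $F[x]$ to get $\ker S\cong F[x]_{<d}$. Your route is arguably a bit quicker and avoids the surjectivity step; the paper's route has the minor bonus of explicitly identifying the row space, which is a pleasant fact in its own right. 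Either way the argument is complete.
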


  For two proofs of this well-known fact, the reader may consult \cite{J}. As this is an Internet reference, and we were unable to find  a textbook or journal reference, we include a  third proof.

  \begin{proof} Let us identify the vector space $F^{k+l}$ with the vector space of polynomials of degree less than $k+l$.  Let any such polynomial correspond to the list of its coefficients, starting with the coefficient of $x^{k+l-1}$ and ending with the constant term.

  Under this correspondence, the row space of the Sylvester matrix $M$ is identified with the set of polynomials of the form $\phi f+\psi g$, where $\phi, \psi\in F[x]$ have degree less than $l$ and $k$, respectively. Any polynomial of this form is divisible by $\gcd(f,g)$. Conversely, any polynomial that is divisible by $\gcd(f,g)$ and has degree less than $k+l$ is in the row space. To see this, we first write such a polynomial as 
  $\phi_0 f+\psi_0 g$, where we know nothing about the degree of $\phi_0, \psi_0\in F[x]$, but then we write $\phi_0=qg+\phi$ with $\phi$ of degree less than $l$, and we define $\psi=qf+\psi_0$.  Then $\phi_0 f+\psi_0 g=\phi f+\psi g$; moreover, this polynomial and $\phi f$ both have degree less than $k+l$, whence so does $\psi g$, showing that $\psi$ has degree less than $k$.

  The rank of $M$ is the dimension of the row space. The theorem follows.
    \end{proof}


We are now ready for the main result of this paper.

\begin{Th}\label{main} Let $f$ and $g$ be monic polynomials with integer coefficients.
 Assume that their resultant $r$ is square-free. Then all positive divisors of $r$  arise as $\gcd(f(n),g(n))$ with suitable integral $n$. Moreover, any $d|r$ arises exactly $\prod (p-1)$ times in each period of length $|r|$, where the product is taken over all (positive)
 prime divisors $p$ of $r/d$. In particular, $|r|$ itself arises once.
\end{Th}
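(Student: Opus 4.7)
The plan is to exploit the square-freeness of $r$ to reduce the problem to a prime-by-prime analysis joined by the Chinese Remainder Theorem. Factor $r=\pm p_1\cdots p_s$ into distinct primes. By Proposition~\ref{div}(a), $\gcd(f(n),g(n))$ divides $r$ and is therefore itself square-free, hence completely determined by which $p_i$ divide it; and by Proposition~\ref{div}(b), everything is $|r|$-periodic. So the theorem reduces to (i) a congruence description, for each prime $p\mid r$, of the residues $n\bmod p$ satisfying $p\mid\gcd(f(n),g(n))$, and (ii) assembling them by CRT.

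For (i), I plan to combine the Smith normal form with the corank theorem stated above. Write $UMV=\diag(d_1,\ldots,d_{k+l})$ with $d_1\mid d_2\mid\cdots\mid d_{k+l}$ and $\prod d_i=\pm r$. For each prime $p\mid r$: if $p$ divided some $d_i$ with $i<k+l$, then $p$ would also divide $d_{k+l}$, forcing $p^2\mid r$ and contradicting square-freeness. Hence $d_1=\cdots=d_{k+l-1}=1$ and $d_{k+l}=\pm r$. Reducing $UMV$ modulo $p$ (and using that $U$ and $V$ remain invertible over $\F_p$), the corank of $M$ over $\F_p$ equals exactly $1$. The corank theorem then says $\gcd(f,g)$ in $\F_p[x]$ is a monic linear polynomial $x-n_p$ for a unique $n_p\in\F_p$; equivalently, $p\mid\gcd(f(n),g(n))$ if and only if $n\equiv n_p\pmod p$.

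For (ii), an integer $n$ satisfies $\gcd(f(n),g(n))=d$ precisely when $n\equiv n_p\pmod p$ for every prime $p\mid d$ and $n\not\equiv n_p\pmod p$ for every prime $p\mid r/d$. CRT on the prime factorisation of $|r|$ then produces exactly $\prod_{p\mid r/d}(p-1)$ admissible residues modulo $|r|$, matching the claim; in particular $d=|r|$ gives the empty product $1$, so $|r|$ is attained exactly once per period. I expect the main obstacle to be establishing that the corank is exactly $1$ modulo each $p\mid r$ --- this is where the square-free hypothesis enters essentially; granted this, the rest is pure CRT bookkeeping.
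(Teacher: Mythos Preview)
Your proposal is correct and follows essentially the same route as the paper: reduce to a prime-by-prime analysis, use the Smith normal form of the Sylvester matrix together with the square-freeness of $r$ to force the mod~$p$ corank to be exactly~$1$, invoke the corank theorem to conclude that $\gcd(f,g)$ in $\F_p[x]$ is linear (hence has a unique root $n_p\in\F_p$), and finish with the Chinese Remainder Theorem. The only cosmetic overstatement is the claim $d_1=\cdots=d_{k+l-1}=1$; your argument as written only gives $p\nmid d_i$ for $i<k+l$ for the fixed prime $p$ under consideration, but that is all you need for the corank computation (and the stronger claim does follow once you run the argument over all primes dividing $r$ and note each $d_i$ divides $r$).
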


\begin{proof} Let $\mathcal P$ be the set of all 
prime divisors of $r$, so that $$r=\pm\prod_{p\in\mathcal P}p.$$ We shall prove that for all subsets $\mathcal S$ of $\mathcal P$, the product $d=\prod_{p\in\mathcal S}p$  arises as $\gcd(f(n),g(n))$ for  a suitable integer $n$; moreover, in each period of length $|r|$, it arises exactly  $\prod_{p\in \mathcal P-\mathcal S}(p-1)$ times. 

For each $p\in \mathcal P$, the $\gcd(f(n),g(n),p)$ is periodic with period $p$.  It suffices to prove that in each period of length $p$, this gcd is $p$ exactly once. 
Indeed, the Chinese remainder theorem will then finish the proof: in each period of length $|r|$, the integers $n$ such that $\gcd(f(n), g(n))=d$ can be found by specifying their value mod $p$ for each $p\in\mathcal P$. For each $p\in\mathcal S$, there is a unique possibility for $n$ mod $p$,  and for each $p\in \mathcal P-\mathcal S$, there are $p-1$ possibilities. 


It suffices to prove that for any prime $p\in \mathcal P$, the polynomials $f$ and $g$, when viewed mod $p$,  have a  
unique common root in $\F_p$; equivalently, the gcd of $f$ and $g$ as elements of $\F_p[x]$ has a unique root in $\F_p$.  In fact, we shall prove that this gcd is a polynomial of degree exactly 1.

It suffices to prove that the mod $p$ corank of the Sylvester matrix $M$ of $f$ and $g$ is 1. But the determinant of $M$ over $\Z$ is $r$, which is divisible by $p$ but not by $p^2$. Now $M$ can be brought to Smith normal form, and from \eqref{det}, we see that
the last invariant factor $d_{k+l}$ is divisible by $p$, but the previous one is not. The mod $p$ corank of the diagonal matrix $UMV$, and therefore also of $M$, is 1, as claimed.
\end{proof}

\begin{Rem}
When $|r|$ is prime,  
the gcd is $|r|$ for $n$ in a  unique residue class mod $r$ and is 1 for all other $n$. This sheds some light on the seemingly peculiar behavior
in Example~\ref{pl}, since $r=13$ for (a) and $r=p$ for (b).
\end{Rem}

When $r$ is not square-free, we know very little about the range of the gcd. At least, we can give  a sufficient condition for 1 to appear in the range. This condition, however, is not necessary; see Example~\ref{4}. 

\begin{Pro}\label{pp} Let $f$ and $g$ be monic polynomials with integer coefficients and resultant $r$.

(a) Suppose that $p$ is prime and  $r$ is not divisible by $p^p$. Then there exists an integer $n$ such that  $\gcd(f(n), g(n))$ is not divisible by $p$.

(b) If $r$ has no divisor of the form $p^p$ with $p$ prime, then there exists an integer $n$ such that $f(n)$ and $g(n)$ are coprime.
\end{Pro}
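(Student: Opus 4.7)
The plan is to prove part (a) first by the same Smith normal form and corank trick that drives the proof of Theorem~\ref{main}, and then deduce (b) from (a) using the Chinese remainder theorem. Note that in both parts, the hypothesis rules out $r=0$, since every integer (and in particular $p^p$) divides $0$.

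For part (a), I will work with $f$ and $g$ as elements of $\F_p[x]$, where they remain monic of degrees $k$ and $l$. Let $h\in\F_p[x]$ denote their gcd, and let $c=\deg h$. By the theorem quoted from \cite{J}, $c$ equals the corank of the Sylvester matrix $M$ viewed over $\F_p$. Bringing $M$ to Smith normal form $UMV=\diag(d_1,\dots,d_{k+l})$ over $\Z$ with $d_i\mid d_{i+1}$, the corank of $M$ mod $p$ is the number of invariant factors divisible by $p$. Since these form an ascending chain under divisibility, the last $c$ of them are divisible by $p$, so by \eqref{det} we get $p^c\mid r$. The hypothesis $p^p\nmid r$ therefore forces $c<p$, so $h$ has fewer than $p$ roots in $\F_p$. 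Pick any $n\in\F_p$ that is not a root of $h$; this $n$ is not a common root of $f$ and $g$ mod $p$, hence $p\nmid\gcd(f(n),g(n))$. This is the desired conclusion, for any integer lifting the chosen residue.

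For part (b), the set of primes $p$ that can possibly divide $\gcd(f(n),g(n))$ for some $n$ is contained in the set of prime divisors of $r$ by Proposition~\ref{div}(a); since the hypothesis implies $r\ne0$, this set is finite. For each such prime $p$, the hypothesis guarantees $p^p\nmid r$, so by part (a) there exists a residue class $n\equiv n_p\pmod p$ for which $p\nmid\gcd(f(n),g(n))$. The Chinese remainder theorem produces an integer $n$ that lies in the good residue class modulo every prime divisor of $r$ simultaneously; for this $n$, no prime divides $\gcd(f(n),g(n))$, so $\gcd(f(n),g(n))=1$.

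I do not expect a serious obstacle: the entire argument is really just the corank-versus-divisibility dictionary supplied by the Smith normal form, combined with the elementary fact that a polynomial of degree $<p$ in $\F_p[x]$ cannot vanish on all of $\F_p$. If anything needs care, it is just the bookkeeping that the hypothesis $p^p\nmid r$ already excludes $r=0$, so the CRT step in (b) is finite and the argument does not need a separate case for polynomials with a common factor in $\Z[x]$.
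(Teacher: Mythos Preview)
Your proof is correct and follows essentially the same approach as the paper's: both bound the mod-$p$ corank of the Sylvester matrix via the Smith normal form, translate that into $\deg\gcd_{\F_p[x]}(f,g)<p$, conclude that not every element of $\F_p$ is a common root, and then assemble part~(b) by the Chinese remainder theorem. The only cosmetic difference is that the paper phrases the last step of~(a) via the B\'ezout identity $h=\phi f+\psi g$, whereas you invoke directly that common roots of $f$ and $g$ coincide with roots of $h$.
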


\begin{proof} (a)  Again we exploit the fact that $r=\pm d_1\cdots d_{k+l}$, where the $d_i$ are the invariant factors of the Sylvester matrix $M$. Since $d_i|d_{i+1}$ for all $i$, and $p^p\not | r$, it follows that at most the last $p-1$ invariant factors $d_i$ can be divisible by $p$. In other words, the mod $p$ corank of $M$ is less than $p$, so the degree of the gcd of $f$ and $g$ as elements of $\F_p[x]$ is less than $p$, and therefore this gcd cannot vanish as a function $\F_p\to\F_p$. But this gcd can be written as $\phi f+\psi g$ with $\phi, \psi\in \F_p[x]$, so it follows that $f$ and $g$ cannot both vanish as functions $\F_p\to\F_p$.

(b) For all prime divisors $p$ of $r$, we can use statement (a) to get an integer $n_p$ such that $\gcd(f(n_p), g(n_p))$ is not divisible by $p$. The Chinese remainder theorem gives us an integer $n$ such that $n\equiv n_p\mod p$ for all $p$. This $n$ will have the desired property.
\end{proof}

\begin{Rem}
Throughout this paper, we have studied two monic polynomials over the ring $\Z$ of integers. However, $\Z$ can be replaced by an arbitrary principal ideal domain $A$. Our results and their proofs remain valid, with trivial modifications.

For example, 
Proposition~\ref{div}(b) should be interpreted as saying that $(f(n),g(n))=(f(n'), g(n'))$ whenever $n, n'\in A$ and $r|n-n'$ in $A$. Note that this is an equality of ideals of $A$.

In this general setting, the conclusion of Theorem~\ref{main} is replaced by the following.  There exist constants $c_{P}\in A$, one for each  prime ideal   $P$ containing $r$, such that for any divisor $d$ of $r$, and any $n\in A$,  we have  $(f(n),g(n))=(d)$
if and only if $n-c_{P}\in P$ for each $ P$ containing $d$ but  $n-c_{P}\not\in P$ for each $ P$ that does not contain $d$.
Such elements $n$ exist for any divisor $d$ of $r$. When $d=r$, they form a  coset $c+(r)$.

The $p^p$ in Proposition~\ref{pp} should be interpreted as $p^{|A/(p)|}$. This can be $p^\infty$, which, by definition, divides only 0.
\end{Rem}

\section*{Acknowledgements} We are grateful to the Editorial Board of the \textsc{Monthly} and to the two unnamed referees of this paper for many useful comments.

\end{document}